\newtheorem{theorem}{Theorem}[section]
\newtheorem{proposition}[theorem]{Proposition}
\numberwithin{equation}{section}
\begin{document}

\title[Boundary determination of the Riemannian metric]{Boundary determination of the Riemannian metric by the elastic Dirichlet-to-Neumann map}

\author{Xiaoming Tan}
\address{Beijing International Center for Mathematical Research, Peking University, Beijing 100871, China}
\email{tanxm@pku.edu.cn}

\subjclass[2020]{53C21, 35R30, 58J32, 58J40, 74E05}

\keywords{Boundary determination; Riemannian metric; Inverse problems; elastic Dirichlet-to-Neumann map; Pseudodifferential operators.\\
{\bf -----------------}\\
{\it Email address}: tanxm@pku.edu.cn\\
Beijing International Center for Mathematical Research, Peking University, Beijing 100871, China}

\begin{abstract}
    For a compact connected Riemannian manifold with smooth boundary, by computing the full symbol of the elastic Dirichlet-to-Neumann map, we prove that the elastic Dirichlet-to-Neumann map can uniquely determine the partial derivatives of all orders of the Riemannian metric on the boundary of the manifold.
\end{abstract}

\maketitle 

\section{Introduction}

\vspace{5mm}

Let $(M,g)$ be a compact connected Riemannian manifold of dimension $n$ with smooth boundary $\partial M$. In this paper we consider $M$ as an inhomogeneous, isotropic, elastic medium. Assume that the Lam\'{e} coefficients $\lambda,\,\mu \in C^{\infty}(\bar{M})$ of this elastic medium are known and satisfy
\begin{align}\label{0.3}
    \mu>0, \quad \lambda + \mu \geqslant 0.
\end{align}

In the local coordinates $\{x_j\}_{j=1}^n$, we denote by $\bigl\{\frac{\partial}{\partial x_j}\bigr\}_{j=1}^n$ and $\{dx_j\}_{j=1}^n$, respectively, the natural basis for the tangent space $T_x M$ and the cotangent space $T_x^{*} M$ at the point $x\in M$. In what follows, we will use the Einstein summation convention. The Greek indices run from 1 to $n-1$, whereas the Roman indices run from 1 to $n$, unless otherwise specified. Then, the Riemannian metric $g$ is given by $g = g_{jk} \,dx_j\otimes dx_k$. Let $\nabla_j=\nabla_{\frac{\partial}{\partial x_j}}$ be the covariant derivative with respect to $\frac{\partial}{\partial x_j}$ and $\nabla^j= g^{jk} \nabla_k$, where $[g^{jk}]=[g_{jk}]^{-1}$. 

\subsection{Lam\'{e} operator}

The Lam\'{e} operator $\mathcal{L}_{\lambda,\mu}$ with variable coefficients $\lambda$ and $\mu$ on the Riemannian manifold is given by, for smooth displacement vector field $\bm{u}=u^j\frac{\partial}{\partial x_j}$ (see \cite{TanLiu23}),
\begin{align}\label{1.1}
    \mathcal{L}_{\lambda,\mu} \bm{u} 
    &:= \mu \Delta^{}_{B} \bm{u} + (\lambda + \mu)\operatorname{grad}\operatorname{div} \bm{u} + \mu \operatorname{Ric}(\bm{u}) \notag\\
    &\qquad + (\operatorname{grad} \lambda) \operatorname{div} \bm{u} + (S\bm{u})(\operatorname{grad} \mu), 
\end{align} 
where the Bochner Laplacian $(\Delta^{}_{B}\bm{u})^j := \nabla^k \nabla_k u^j$, $\operatorname{grad}$ and $\operatorname{div}$ are, respectively, the gradient and divergence operators, the strain tensor $S$ is defined by (see \cite[p.\,562]{Taylor11.3})
\begin{align}\label{1.5}
    (S\bm{u})^j_k := \nabla^j u_k + \nabla_k u^j,\quad u_k:=g_{kl}u^l.
\end{align}
Here $\operatorname{Ric}(\bm{u})^j = g^{jk}R_{kl} u^l$, where $R_{kl}$ are the components of Ricci tensor of the manifold, i.e.,
\begin{align}\label{0.03}
    R_{kl}=\frac{\partial \Gamma^{j}_{kl}}{\partial x_j} - \frac{\partial \Gamma^{j}_{jl}}{\partial x_k} + \Gamma^{j}_{jm} \Gamma^{m}_{kl} - \Gamma^{j}_{km} \Gamma^{m}_{jl},
\end{align}
the Christoffel symbols
\begin{align*}
    \Gamma^{j}_{kl} = \frac{1}{2} g^{jm} \Bigl(\frac{\partial g_{km}}{\partial x_l} + \frac{\partial g_{lm}}{\partial x_k} - \frac{\partial g_{kl}}{\partial x_m}\Bigr).
\end{align*}

\subsection{Elastic Dirichlet-to-Neumann map}

We consider the following Dirichlet boundary value problem for the Lam\'{e} system on the Riemannian manifold:
\begin{align}\label{1.3}
    \begin{cases}
        \mathcal{L}_{\lambda,\mu} \bm{u} = 0  & \text{in}\ M, \\
        \bm{u}= \bm{f} \quad & \text{on}\ \partial M.
    \end{cases}
\end{align}
For any displacement $\bm{f}\in[H^{1/2}(\partial M)]^n$ on the boundary, by the theory of elliptic operators, there is a unique solution $\bm{u}\in[H^1(M)]^n$ solving the above problem \eqref{1.3}. Therefore, the elastic Dirichlet-to-Neumann map (also called the displacement-to-traction map) $\Lambda_{\lambda,\mu}:[H^{1/2}(\partial M)]^n \to [H^{-1/2}(\partial M)]^n$ associated with the operator $\mathcal{L}_{\lambda,\mu}$ is defined by (see \cite{TanLiu23})
\begin{align}\label{1.8}
    \Lambda_{\lambda,\mu}(\bm{f}) := \tau(\nu) = \lambda (\operatorname{div} \bm{u})\nu + \mu (S \bm{u})\nu \quad \text{on}\ \partial M,
\end{align}
where $\nu$ is the outward unit normal vector to the boundary $\partial M$. Physically, $\tau(\nu)$ is called the Neumann boundary condition (or free boundary condition, or traction boundary condition) of \eqref{1.3}. Roughly speaking, the elastic Dirichlet-to-Neumann map $\Lambda_{\lambda,\mu}$ sends the displacement at the boundary to the corresponding normal component of the stress (i.e., the traction) at the boundary (see \cite{NakaUhlm94}). It is clear that the elastic Dirichlet-to-Neumann map $\Lambda_{\lambda,\mu}$ is an elliptic, self-adjoint pseudodifferential operator of order one defined on the boundary $\partial M$.

In \cite{LeeUhlm89}, the authors proved that the Dirichlet-to-Neumann map uniquely determine the real-analytic Riemannian metric. In \cite{LassUhlm01}, the authors studied the inverse problem of determining a Riemannian manifold from the boundary data of harmonic functions, this extend the results in \cite{LeeUhlm89}. Moreover, \cite{LassTaylUhlm03} considered the case of complete Riemannian manifold.
In \cite{Liu19,Liu19.2}, the author proved that the elastic Dirichlet-to-Neumann map with constant coefficients and the electromagnetic Dirichlet-to-Neumann map can uniquely determine the real-analytic Riemannian metric and parameters. In \cite{TanLiu23}, the authors gave an explicit expression for the full symbol of the elastic Dirichlet-to-Neumann map with variable coefficients and proved that the elastic Dirichlet-to-Neumann map uniquely determines the Lam\'{e} coefficients.

\vspace{2mm}

The main result of this paper is the following theorem.

\begin{theorem}\label{thm1.1}
    Let $(M,g)$ be a compact connected Riemannian manifold of dimension $n$ with smooth boundary $\partial M$. Assume that the Lam\'{e} coefficients $\lambda,\,\mu \in C^{\infty}(\bar{M})$ satisfy $\mu>0$ and $\lambda + \mu \geqslant 0$. Then, the elastic Dirichlet-to-Neumann map $\Lambda_{\lambda,\mu}$ uniquely determines the partial derivatives of all orders of the Riemannian metric $\frac{\partial^{|J|} g^{\alpha\beta}}{\partial x^J}$ on the boundary $\partial M$ for all multi-indices $J$.
\end{theorem}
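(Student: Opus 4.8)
The strategy is the classical boundary-determination technique of Lee–Uhlmann, adapted to the elastic (system-valued) setting. I would work in boundary normal coordinates: fix a point $x_0 \in \partial M$ and choose coordinates $(x', x_n)$ so that $x_n$ is the distance to the boundary, $\partial M = \{x_n = 0\}$, the metric splits as $g = g_{\alpha\beta}(x)\, dx^\alpha \otimes dx^\beta + dx_n \otimes dx_n$, and $g^{nn} = 1$, $g^{n\alpha} = 0$ on all of a collar neighborhood. In such coordinates the outward unit normal is $\nu = -\partial/\partial x_n$, which simplifies the traction operator $\Lambda_{\lambda,\mu}$. The plan is to freeze the tangential covariable $\xi' \in T_{x_0}^* \partial M$, write the Lamé system $\mathcal{L}_{\lambda,\mu}\bm u = 0$ as a first-order system in $x_n$ for $(\bm u, \partial_{x_n} \bm u)$, and factorize it (modulo smoothing) into forward and backward parabolic parts; the elastic Dirichlet-to-Neumann map is then, modulo smoothing operators, $\Lambda_{\lambda,\mu} = $ (boundary traction applied to) the matrix-valued symbol of the "incoming" factor, which can be computed recursively as a matrix pseudodifferential operator $\sum_{k \le 1} \Lambda_k(x', \xi')$ where $\Lambda_k$ is homogeneous of degree $k$ in $\xi'$.

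The heart of the argument is the symbol calculus, which I would take from \cite{TanLiu23}: there the full symbol of $\Lambda_{\lambda,\mu}$ with variable metric is computed explicitly and recursively. What I need is to read off from the term $\Lambda_{1-m}(x', \xi')$ (the symbol component homogeneous of degree $1-m$) exactly which derivatives of the metric appear. The principal symbol $\Lambda_1$ should be a universal matrix-valued function of the cometric $g^{\alpha\beta}(x_0)$ (and of $\lambda, \mu$ at $x_0$, which are assumed known) applied to $\xi'$; inverting this algebraic relation recovers $g^{\alpha\beta}(x_0)$, hence $g_{\alpha\beta}(x_0)$ and its inverse $[g^{\alpha\beta}]$, for every $x_0 \in \partial M$. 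Then I proceed by induction on $m \ge 1$: assuming $\Lambda_{\lambda,\mu}$ determines $\frac{\partial^{|J|} g^{\alpha\beta}}{\partial x^J}(x_0)$ for all $|J| \le m-1$ and all $x_0 \in \partial M$, I examine $\Lambda_{1-m}$. This symbol will be a sum of a term linear in the $m$-th order derivatives $\partial^J g^{\alpha\beta}$ (with $|J| = m$), with an explicit, invertible coefficient matrix depending only on lower-order data, plus a remainder built entirely out of derivatives of order $\le m-1$ of the metric and derivatives of $\lambda, \mu$ — all already known. Solving the linear relation yields all $m$-th order derivatives at $x_0$. The tangential derivatives $\partial^{J'} g^{\alpha\beta}$ with $J' = (J', 0)$ are of course automatically known once $g^{\alpha\beta}|_{\partial M}$ is determined as a function on $\partial M$; the genuine content of each inductive step is extracting the normal derivative $\partial_{x_n}^m g^{\alpha\beta}|_{x_n = 0}$, which is encoded in $\Lambda_{1-m}$ but in no lower-order symbol.

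The main obstacle — and the place where real work is needed — is verifying that at each order the coefficient of the top normal derivative $\partial_{x_n}^m g^{\alpha\beta}$ in $\Lambda_{1-m}$ is an \emph{invertible} matrix-valued function of $\xi'$ (for $\xi' \neq 0$), so that the inductive step can actually be solved, and separating this leading contribution cleanly from the lower-order clutter in the recursive formula for the symbol. This requires care because the Lamé operator is not scalar: the factorization produces matrix Riccati-type recursions, and one must track how each successive normal differentiation of the metric feeds into successive symbol orders. I expect the key computation to parallel the scalar (Laplace–Beltrami) case treated in \cite{LeeUhlm89}, where the coefficient is essentially $|\xi'|_g$ times a nonzero constant, but here it will be a matrix built from the principal symbol of the elastic system; its invertibility should follow from the strong ellipticity hypothesis $\mu > 0$, $\lambda + \mu \ge 0$. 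A secondary technical point is handling the non-diagonal coupling in the elasticity system so that the recursion closes — i.e., confirming that no derivative of order $> m$ of the metric sneaks into $\Lambda_{1-m}$ — which is guaranteed by the structure of the symbol expansion but should be stated explicitly.
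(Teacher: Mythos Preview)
Your plan is correct and is essentially the paper's approach: boundary normal coordinates, the symbol recursion from \cite{TanLiu23}, recovery of $g^{\alpha\beta}$ from the principal symbol, and induction on the order of the normal derivative with the tangential derivatives coming for free. The one place where the paper's implementation is more specific than your sketch is the ``invertibility'' step: rather than inverting a matrix-valued coefficient, the paper reads off only the $(n,n)$-entry of each symbol $p_{1-m}$, which has the form
\[
(p_{1-m})^n_n \;=\; -\,\frac{\mu^2}{(\lambda+3\mu)^2(\lambda+2\mu)|\xi'|^2}\,k_m^{\alpha\beta}\xi_\alpha\xi_\beta \;+\; (\text{lower-order data}),
\qquad
k_m^{\alpha\beta} = (2\lambda+5\mu)\,h_m\,g^{\alpha\beta} - (\lambda+2\mu)\,\partial_{x_n}^m g^{\alpha\beta},
\]
with $h_m = g_{\alpha\beta}\,\partial_{x_n}^m g^{\alpha\beta}$; so the top normal derivative enters coupled with its own trace, and one recovers $\partial_{x_n}^m g^{\alpha\beta}$ by first contracting with $g_{\alpha\beta}$ to isolate $h_m$ (using that $(2n-3)(\lambda+\mu)+(3n-4)\mu>0$ under the hypotheses) and then back-substituting --- a point worth making explicit when you carry out the computation.
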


This paper is organized as follows. In Section \ref{s2}, we give the symbols of some pseudodifferential operators. In Section \ref{s3}, we prove the main result by the full symbol of the elastic Dirichlet-to-Neumann map.

\vspace{5mm}

\section{Symbols of the pseudodifferential operators}\label{s2}

\vspace{5mm}

For the sake of simplicity, we denote by $i=\sqrt{-1}$, $\xi^{\prime}=(\xi_1,\dots,\xi_{n-1})$, $\xi^\alpha=g^{\alpha\beta}\xi_\beta$, $|\xi^{\prime}|=\sqrt{\xi^\alpha\xi_\alpha}$, $I_n$ the $n\times n$ identity matrix, and
\begin{align*}
    \begin{bmatrix}
        [a^\alpha_\beta] & [a^{\alpha}] \\
        [a_{\beta}] & a^{n}_{n}
    \end{bmatrix}
    :=\begin{bmatrix}
        \begin{BMAT}{ccc.c}{ccc.c}
            a^1_{1} & \dots & a^1_{n-1} \ & a^1_{n} \\
            \vdots & \ddots & \vdots\ & \vdots \\
            a^{n-1}_1 & \dots & a^{n-1}_{n-1}\ &\ a^{n-1}_{n}\\
            a^{n}_1 & \dots & a^{n}_{n-1}\ & a^{n}_{n}
        \end{BMAT}
    \end{bmatrix},
\end{align*}
where $ 1 \leqslant \alpha,\beta \leqslant n-1$.

In the previous work \cite{TanLiu23}, for the Lam\'{e} operator $\mathcal{L}_{\lambda,\mu}$ with variable coefficients $\lambda$ and $\mu$, we had deduced that
\begin{align}\label{3.07}
    A^{-1} \mathcal{L}_{\lambda,\mu} = I_{n}\frac{\partial^2 }{\partial x_n^2} + B \frac{\partial }{\partial x_n} + C,
\end{align}
where
\begin{align}\label{3.2}
        A=
        \begin{bmatrix}
            \mu I_{n-1} &0  \\
            0& \lambda+2\mu 
        \end{bmatrix},
\end{align}
$B=B_1+B_0$, $C =C_2+C_1+C_0$, and
\begin{align*}
    &B_1=(\lambda+\mu)
    \begin{bmatrix}
        0 & \displaystyle \frac{1}{\mu}\Big[g^{\alpha\beta}\frac{\partial}{\partial x_{\beta}}\Bigr]\\
        \displaystyle \frac{1}{\lambda+2\mu}\Big[\frac{\partial}{\partial x_{\beta}}\Bigr] & 0 
    \end{bmatrix},\\
    &B_0=
    \begin{bmatrix}
        \Gamma^\alpha_{\alpha n} I_{n-1}+2[\Gamma^\alpha_{\beta n}] & 0 \\[2mm]
        \displaystyle \frac{\lambda+\mu}{\lambda+2\mu}[\Gamma^\alpha_{\alpha\beta}] & \Gamma^\alpha_{\alpha n}
    \end{bmatrix}
    +\begin{bmatrix}
        \displaystyle \frac{1}{\mu} \frac{\partial \mu}{\partial x_{n}} I_{n-1} &  \displaystyle \frac{1}{\mu} [\nabla^\alpha \lambda] \\[4mm]
        \displaystyle \frac{1}{\lambda+2\mu} \Big[\frac{\partial \mu}{\partial x_{\beta}}\Big] &  \displaystyle \frac{1}{\lambda+2\mu} \frac{\partial (\lambda+2\mu)}{\partial x_{n}}
    \end{bmatrix},\\
    &C_2=
    \begin{bmatrix}
        \displaystyle \Big(g^{\alpha\beta}\frac{\partial^2 }{\partial x_\alpha \partial x_\beta}\Big)I_{n-1} + \frac{\lambda+\mu}{\mu}\Big[g^{\alpha\gamma}\frac{\partial^2 }{\partial x_\gamma \partial x_\beta}\Bigr] & 0 \\
        0 &  \displaystyle \frac{\mu}{\lambda+2\mu}g^{\alpha\beta}\frac{\partial^2 }{\partial x_\alpha \partial x_\beta} 
    \end{bmatrix},\\
    &C_1=
    \begin{bmatrix}
        \displaystyle \Big(\Big(g^{\alpha\beta}\Gamma^\gamma_{\alpha\gamma}+\frac{\partial g^{\alpha\beta}}{\partial x_{\alpha}}\Big) \frac{\partial }{\partial x_{\beta}}\Big) I_{n-1}  & 0 \\
        0 &  \displaystyle \frac{\mu}{\lambda+2\mu}\Big(g^{\alpha\beta}\Gamma^\gamma_{\alpha\gamma}+\frac{\partial g^{\alpha\beta}}{\partial x_{\alpha}}\Big)\frac{\partial }{\partial x_{\beta}}
    \end{bmatrix}\\
    &\ + \frac{\lambda+\mu}{\mu}
    \begin{bmatrix}
        \displaystyle \Big[g^{\alpha\gamma}\Gamma^\rho_{\rho\beta}\frac{\partial }{\partial x_{\gamma}}\Big] &  \displaystyle \Big[g^{\alpha\gamma}\Gamma^\rho_{\rho n}\frac{\partial }{\partial x_{\gamma}}\Big] \\
        0 & 0
    \end{bmatrix}
    +2
    \begin{bmatrix}
        \displaystyle \Big[g^{\gamma\rho}\Gamma^\alpha_{\rho\beta}\frac{\partial }{\partial x_{\gamma}}\Big] &  \displaystyle \Big[g^{\gamma\rho}\Gamma^\alpha_{\rho n}\frac{\partial }{\partial x_{\gamma}}\Big] \\[4mm]
        \displaystyle \frac{\mu}{\lambda+2\mu}\Big[g^{\gamma\rho}\Gamma^n_{\rho\beta}\frac{\partial }{\partial x_{\gamma}}\Big] & 0
    \end{bmatrix}\\
    &\ +
    \begin{bmatrix}
        \displaystyle \frac{1}{\mu}\Big(\nabla^\alpha \mu \frac{\partial }{\partial x_{\alpha}}\Big) I_{n-1} + \frac{1}{\mu} \Big[\nabla^\alpha \lambda \frac{\partial }{\partial x_{\beta}} + g^{\alpha\gamma} \frac{\partial \mu}{\partial x_{\beta}} \frac{\partial }{\partial x_{\gamma}}\Big] &  \displaystyle \frac{1}{\mu} \frac{\partial \mu}{\partial x_{n}}\Big[g^{\alpha\beta} \frac{\partial }{\partial x_{\beta}}\Big] \\[4mm]
        \displaystyle \frac{1}{\lambda+2\mu} \frac{\partial \lambda}{\partial x_{n}} \Big[\frac{\partial }{\partial x_{\beta}}\Big] &  \displaystyle \frac{1}{\lambda+2\mu} \nabla^{\alpha}\mu \frac{\partial }{\partial x_{\alpha}}
    \end{bmatrix},\\
    &C_0 =(\lambda+\mu)
    \begin{bmatrix}
        \displaystyle \frac{1}{\mu}\Big[g^{\alpha\gamma}\frac{\partial \Gamma^\rho_{\rho\beta}}{\partial x_\gamma}\Big] &  \displaystyle \frac{1}{\mu}\Big[g^{\alpha\gamma}\frac{\partial \Gamma^\rho_{\rho n}}{\partial x_\gamma}\Big] \\[4mm]
        \displaystyle \frac{1}{\lambda+2\mu}\Big[\frac{\partial \Gamma^\alpha_{\alpha\beta}}{\partial x_n}\Big] &  \displaystyle \frac{1}{\lambda+2\mu}\frac{\partial \Gamma^\alpha_{\alpha n}}{\partial x_n}
    \end{bmatrix}
    +
    \begin{bmatrix}
        \displaystyle \Big[g^{ml}\frac{\partial \Gamma^\alpha_{ml}}{\partial x_\beta}\Big] &  \displaystyle \Big[g^{ml}\frac{\partial \Gamma^\alpha_{ml}}{\partial x_n}\Big] \\[4mm]
        \displaystyle \frac{\mu}{\lambda+2\mu}\Big[g^{ml}\frac{\partial \Gamma^n_{ml}}{\partial x_\beta}\Big] &  \displaystyle \frac{\mu}{\lambda+2\mu}g^{ml}\frac{\partial \Gamma^n_{ml}}{\partial x_n}
    \end{bmatrix}\\
    &\ +
    \begin{bmatrix}
        \displaystyle \frac{1}{\mu} \Big[\nabla^\alpha \lambda \Gamma^\gamma_{\beta\gamma} - \frac{\partial \mu}{\partial x_{\gamma}} \frac{\partial g^{\alpha\gamma}}{\partial x_{\beta}}\Big] &  \displaystyle \frac{1}{\mu} \Big[\nabla^\alpha \lambda \Gamma^\beta_{\beta n} - \frac{\partial \mu}{\partial x_{\beta}} \frac{\partial g^{\alpha\beta}}{\partial x_{n}}\Big] \\[4mm]
        \displaystyle \frac{1}{\lambda+2\mu} \frac{\partial \lambda}{\partial x_{n}} [\Gamma^\alpha_{\alpha\beta}] &  \displaystyle \frac{1}{\lambda+2\mu} \frac{\partial \lambda}{\partial x_{n}} \Gamma^\alpha_{\alpha n}
    \end{bmatrix}.
\end{align*}

Let 
    \begin{align*}
        b(x,\xi^{\prime})=b_1(x,\xi^{\prime}) + b_0(x,\xi^{\prime})
    \end{align*}
    and
    \begin{align*}
        c(x,\xi^{\prime}) = c_2(x,\xi^{\prime}) + c_1(x,\xi^{\prime}) + c_0(x,\xi^{\prime})
    \end{align*}
    be the full symbols of $B$ and $C$, respectively, where $b_j(x,\xi^{\prime})$ and $c_j(x,\xi^{\prime})$ are homogeneous of degree $j$ in $\xi^{\prime}$. We simply write
    \begin{align*}
        \xi^{\alpha}=g^{\alpha\beta}\xi_{\beta},\quad |\xi^{\prime}|^2=\xi^{\alpha}\xi_{\alpha}=g^{\alpha\beta}\xi_\alpha\xi_\beta.
    \end{align*}
    Thus, we obtain (see \cite{TanLiu23})
    \begin{align}
        &\label{5.9} b_1(x,\xi^{\prime})=i(\lambda+\mu)
        \begin{bmatrix}
            0 &  \displaystyle \frac{1}{\mu}[\xi^\alpha] \\
            \displaystyle \frac{1}{\lambda+2\mu}[\xi_\beta] & 0
        \end{bmatrix},\\
        &\label{5.10} b_0(x,\xi^{\prime})=B_0,\\
        &\label{5.11} c_2(x,\xi^{\prime})= -
        \begin{bmatrix}
            \displaystyle |\xi^{\prime}|^2 I_{n-1} + \frac{\lambda+\mu}{\mu}[\xi^\alpha\xi_\beta] & 0 \\
            0 &  \displaystyle \frac{\mu}{\lambda+2\mu}|\xi^{\prime}|^2
        \end{bmatrix},\\
        &\label{5.12} c_1(x,\xi^{\prime})= i
        \begin{bmatrix}
            \displaystyle \Big(\xi^\alpha\Gamma^\beta_{\alpha\beta}+\frac{\partial \xi^\alpha}{\partial x_{\alpha}}\Big) I_{n-1} & 0 \\
            0 &  \displaystyle \frac{\mu}{\lambda+2\mu}\Big(\xi^\alpha\Gamma^\beta_{\alpha\beta}+\frac{\partial \xi^\alpha}{\partial x_{\alpha}}\Big)
        \end{bmatrix} \notag\\
        &\quad +\frac{i(\lambda+\mu)}{\mu}
        \begin{bmatrix}
            \displaystyle [\xi^\alpha\Gamma^\gamma_{\gamma\beta}] &  \displaystyle \Gamma^\beta_{\beta n}[\xi^\alpha] \\
            0 & 0
        \end{bmatrix} + 2i
        \begin{bmatrix}
            \displaystyle [\xi^\gamma\Gamma^\alpha_{\gamma\beta}] &  \displaystyle [\xi^\gamma\Gamma^\alpha_{\gamma n}] \\[2mm]
            \displaystyle \frac{\mu}{\lambda+2\mu}[\xi^\gamma\Gamma^n_{\gamma\beta}] & 0 
        \end{bmatrix} \notag\\
        &\quad + i
        \begin{bmatrix}
            \displaystyle \frac{1}{\mu} (\xi_{\alpha} \nabla^\alpha \mu) I_{n-1} + \frac{1}{\mu} \Big[\xi_{\beta}\nabla^\alpha \lambda  + \xi^\alpha \frac{\partial \mu}{\partial x_{\beta}}\Big] &  \displaystyle \frac{1}{\mu} \frac{\partial \mu}{\partial x_{n}} [\xi^\alpha] \\[4mm]
            \displaystyle \frac{1}{\lambda+2\mu} \frac{\partial \lambda}{\partial x_{n}} [\xi_{\beta}] &  \displaystyle \frac{1}{\lambda+2\mu} \xi_{\alpha} \nabla^{\alpha}\mu
        \end{bmatrix},\\
        &\label{5.13} c_0(x,\xi^{\prime}) =C_0.
    \end{align} 

For the convenience of stating the following proposition, we define 
\begin{align}
    \label{2.11} E_1&:=i\sum_\alpha\frac{\partial (q_1-b_1)}{\partial \xi_\alpha}\frac{\partial q_1}{\partial x_\alpha}+b_0q_1+\frac{\partial q_1}{\partial x_n} - c_1,\\
    \label{3.05} E_0&:=i\sum_\alpha\Bigl(\frac{\partial (q_1-b_1)}{\partial \xi_\alpha}\frac{\partial q_0}{\partial x_\alpha}+\frac{\partial q_0}{\partial \xi_\alpha}\frac{\partial q_1}{\partial x_\alpha}\Bigr)+\frac{1}{2}\sum_{\alpha,\beta}\frac{\partial^2q_1}{\partial \xi_\alpha \partial\xi_\beta}\frac{\partial^2q_1}{\partial x_\alpha \partial x_\beta} \notag\\
    &\quad -q_0^2 +b_0q_0 +\frac{\partial q_0}{\partial x_n} - c_0,\\
    \label{4.1} E_{-m}&:= b_0q_{-m}+\frac{\partial q_{-m}}{\partial x_n} - i\sum_\alpha\frac{\partial b_1}{\partial \xi_\alpha}\frac{\partial q_{-m}}{\partial x_\alpha} - \sum_{\substack{-m \leqslant j,k \leqslant 1 \\ |J| = j + k + m}} \frac{(-i)^{|J|}}{J !} \partial_{\xi^{\prime}}^{J} q_j\, \partial_{x^\prime}^{J} q_k
\end{align}
for $m \geqslant 1$, where $q_j=q_j(x,\xi^{\prime})$, $b_j=b_j(x,\xi^{\prime})$, and $c_j=c_j(x,\xi^{\prime})$.

We have the following two results.

\begin{proposition}[see \cite{TanLiu23}]\label{prop3.1}
    Let $Q(x,\partial_{x^\prime})$ be a pseudodifferential operator of order one in $x^\prime$ depending smoothly on $x_n$ such that
    \begin{align*}
        A^{-1}\mathcal{L}_{\lambda,\mu}
        = \Bigl(I_{n}\frac{\partial }{\partial x_n} + B - Q\Bigr)\Bigl(I_{n}\frac{\partial }{\partial x_n} + Q\Bigr)
    \end{align*}
    modulo a smoothing operator. Let $q(x,\xi^{\prime}) \sim \sum_{j\leqslant 1} q_j(x,\xi^{\prime})$ be the full symbol of $Q$, where $q_j(x,\xi^{\prime})\in \mathcal{S}^{j}$ are homogeneous of degree $j$ in $\xi^{\prime}\in \mathbb{R}^{n-1}$. Then, in boundary normal coordinates,
    \begin{align}
        q_1(x,\xi^{\prime})&=|\xi^{\prime}|I_n + \frac{\lambda+\mu}{\lambda+3\mu}F_1, \label{3.9}\\
        q_{-m-1}(x,\xi^{\prime})&=\frac{1}{2|\xi^{\prime}|}E_{-m} - \frac{\lambda+\mu}{4(\lambda+3\mu)|\xi^{\prime}|^2}(F_2E_{-m}+E_{-m}F_1) \notag\\ 
        &\quad + \frac{(\lambda+\mu)^2}{4(\lambda+3\mu)^2|\xi^{\prime}|^3}F_2E_{-m}F_1, \quad m\geqslant -1, \label{3.1.1}
    \end{align}
    where $E_{-m}$, $m\geqslant -1$, are given by \eqref{2.11}--\eqref{4.1}, and 
    \begin{align}
        \label{5.24} F_1&=
        \begin{bmatrix}
            \displaystyle \frac{1}{|\xi^{\prime}|}[\xi^\alpha\xi_\beta] & i[\xi^\alpha] \\[4mm]
            i[\xi_\beta] & -|\xi^{\prime}|
        \end{bmatrix},\\
        \label{5.25} F_2&=
        \begin{bmatrix}
            \displaystyle \frac{1}{|\xi^{\prime}|}[\xi^\alpha\xi_\beta] &  \displaystyle -\frac{i(\lambda+2\mu)}{\mu}[\xi^\alpha] \\[4mm]
            \displaystyle -\frac{i\mu}{\lambda+2\mu}[\xi_\beta] & -|\xi^{\prime}|
        \end{bmatrix}.
    \end{align}
\end{proposition}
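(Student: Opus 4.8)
\section*{Proof proposal}

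The plan is to prove Proposition~\ref{prop3.1} by the standard factorization technique for the elliptic system $A^{-1}\mathcal{L}_{\lambda,\mu}$, comparing symbols degree by degree. Writing the factorization
\begin{align*}
    A^{-1}\mathcal{L}_{\lambda,\mu}
    = \Bigl(I_{n}\frac{\partial}{\partial x_n} + B - Q\Bigr)\Bigl(I_{n}\frac{\partial}{\partial x_n} + Q\Bigr)
\end{align*}
and expanding using \eqref{3.07}, the cross terms must reproduce the first-order operator $B$ and the zeroth-order operator $C$. Expanding the product and using the composition formula for symbols of pseudodifferential operators, $\sigma(PR)\sim\sum_J \frac{(-i)^{|J|}}{J!}\partial_{\xi^\prime}^J p\,\partial_{x^\prime}^J r$, the identity $A^{-1}\mathcal{L}_{\lambda,\mu}$ in symbols becomes $Q^2 + \partial_{x_n}Q + BQ$ minus the composition corrections equals $-C$ (after matching $\partial_{x_n}^2$ and $B\,\partial_{x_n}$ terms, which are automatic). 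Collecting terms homogeneous of each degree yields a hierarchy: the top-order (degree two) equation determines $q_1$, and the lower-order equations recursively determine $q_{-m-1}$ in terms of $q_1,\dots,q_{-m}$ and the known data $b_j,c_j$. The quantities $E_1,E_0,E_{-m}$ defined in \eqref{2.11}--\eqref{4.1} are precisely these collected lower-order terms, so the recursion reads $q_1 Q_{-m-1} + Q_{-m-1}q_1 = E_{-m}$ in leading symbol, i.e. the matrix equation $q_1 q_{-m-1} + q_{-m-1}q_1 = E_{-m}$.

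First I would solve the degree-two equation. Matching the principal symbols gives $q_1^2 + b_1 q_1 = -c_2$ (all other contributions being lower order), which is a quadratic matrix equation. Using the explicit forms \eqref{5.9} of $b_1$ and \eqref{5.11} of $c_2$, I would verify by direct substitution that $q_1 = |\xi^\prime| I_n + \frac{\lambda+\mu}{\lambda+3\mu}F_1$ with $F_1$ as in \eqref{5.24} solves this equation; the positivity conditions $\mu>0$, $\lambda+\mu\geqslant 0$ guarantee $q_1$ is the correct (positive-definite, ellipticity-selecting) root so that $I_n\partial_{x_n}+Q$ governs the decaying solutions. The key algebraic facts needed are the multiplication rules for the block matrices built from $[\xi^\alpha\xi_\beta]$, $[\xi^\alpha]$, $[\xi_\beta]$, and $|\xi^\prime|$: one computes that $F_1^2$, $F_2F_1$, and products with $[\xi^\alpha\xi_\beta]$ all reduce to scalar multiples of $F_1$, $F_2$, or $|\xi^\prime|^2$ times the identity, because $\xi^\alpha\xi_\alpha=|\xi^\prime|^2$ collapses repeated contractions.

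Next I would solve the recursion for the lower-order symbols. At each order the equation is linear in the unknown $q_{-m-1}$, of Sylvester type $q_1 q_{-m-1} + q_{-m-1} q_1 = E_{-m}$, where $E_{-m}$ is already expressed through previously determined symbols. Rather than diagonalizing, I would exhibit the inverse of the linear map $X\mapsto q_1 X + X q_1$ explicitly. Writing $q_1=|\xi^\prime|I_n+\frac{\lambda+\mu}{\lambda+3\mu}F_1$ and using the identities relating $F_1$ and $F_2$ (in particular $F_2 F_1 = |\xi^\prime|^2 I_n$-type and $F_1 F_2$ relations that follow from \eqref{5.24}--\eqref{5.25}), one checks that the candidate solution
\begin{align*}
    q_{-m-1} = \frac{1}{2|\xi^\prime|}E_{-m} - \frac{\lambda+\mu}{4(\lambda+3\mu)|\xi^\prime|^2}\bigl(F_2 E_{-m}+E_{-m}F_1\bigr) + \frac{(\lambda+\mu)^2}{4(\lambda+3\mu)^2|\xi^\prime|^3}F_2 E_{-m}F_1
\end{align*}
satisfies $q_1 q_{-m-1}+q_{-m-1}q_1 = E_{-m}$ identically. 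This verification is the crux: substituting and collecting, the three terms are arranged so that the unwanted $F_1$- and $F_2$-contributions cancel by the contraction identities, leaving exactly $E_{-m}$. I expect this algebraic verification, managing the two distinct matrices $F_1$ (from $q_1$) and $F_2$ (which must appear on the left to cancel correctly against $q_1$), to be the main obstacle; the rest is bookkeeping of homogeneity degrees to confirm $q_{-m-1}\in\mathcal{S}^{-m-1}$ and that the decomposition $E_1,E_0,E_{-m}$ captures every lower-order symbol contribution from the composition formula. Finally I would note that the construction produces $Q$ only modulo smoothing, consistent with the statement.
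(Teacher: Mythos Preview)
The paper does not prove Proposition~\ref{prop3.1}; it is quoted from \cite{TanLiu23}, so there is no proof in this paper to compare against. Nonetheless, your outline has a genuine gap that would prevent the argument from closing.

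Your Sylvester equation is wrong. Expanding the factorization gives, at the operator level, $\partial_{x_n} Q + BQ - Q^2 = C$; collecting symbol terms of a fixed degree, the linear equation for the unknown $q_{-m-1}$ is
\[
    (q_1 - b_1)\,q_{-m-1} + q_{-m-1}\,q_1 \;=\; E_{-m},
\]
not $q_1 q_{-m-1} + q_{-m-1} q_1 = E_{-m}$ as you write. The term $-b_1 q_{-m-1}$ cannot be absorbed into $E_{-m}$ because it contains the unknown. This is exactly where $F_2$ enters: a direct computation from \eqref{5.9}, \eqref{3.9}, \eqref{5.24}, \eqref{5.25} shows
\[
    q_1 - b_1 \;=\; |\xi'|\,I_n + \frac{\lambda+\mu}{\lambda+3\mu}\,F_2,
\]
so the Sylvester operator is $X \mapsto 2|\xi'|\,X + s\,(F_2 X + X F_1)$ with $s=\frac{\lambda+\mu}{\lambda+3\mu}$. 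The explicit inverse \eqref{3.1.1} then follows immediately from the nilpotency identities $F_1^2 = 0$ and $F_2^2 = 0$ (each is a one-line check using $\xi^\alpha\xi_\alpha=|\xi'|^2$), which truncate the Neumann series for the inverse after three terms. Without the correct left factor $q_1 - b_1$, you would have $F_1$ on both sides and no mechanism for $F_2$ to appear---which is precisely the puzzle you flag at the end of your proposal. (There is also a sign slip at top order: the principal equation is $q_1^2 - b_1 q_1 + c_2 = 0$, not $q_1^2 + b_1 q_1 = -c_2$.)
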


\begin{theorem}[see \cite{TanLiu23}]\label{thm1.3}
    Let $(M,g)$ be a smooth compact Riemannian manifold of dimension $n$ with smooth boundary $\partial M$. Assume that the Lam\'{e} coefficients $\lambda,\,\mu \in C^{\infty}(\bar{M})$ satisfy $\mu>0$ and $\lambda + \mu \geqslant 0$. Let $\sigma(\Lambda_{\lambda,\mu}) \sim \sum_{j\leqslant 1} p_j(x,\xi^{\prime})$ be the full symbol of the elastic Dirichlet-to-Neumann map $\Lambda_{\lambda,\mu}$. Then, in boundary normal coordinates,
    \begin{align}
        \label{18} p_1(x,\xi^{\prime})&=
        \begin{bmatrix}
            \displaystyle \mu|\xi^{\prime}|I_{n-1} + \frac{\mu(\lambda+\mu)}{(\lambda+3\mu)|\xi^{\prime}|} [\xi^\alpha\xi_\beta] & \displaystyle -\frac{2i\mu^2}{\lambda+3\mu} [\xi^\alpha]\\[4mm]
            \displaystyle \frac{2i\mu^2}{\lambda+3\mu}[\xi_\beta] &\displaystyle  \frac{2\mu(\lambda+2\mu)}{\lambda+3\mu}|\xi^{\prime}|
        \end{bmatrix},\\
        \label{19} p_0(x,\xi^{\prime})&=
        \begin{bmatrix}
            \mu I_{n-1} &0  \\
            0& \lambda+2\mu 
        \end{bmatrix}
        q_0(x,\xi^{\prime}) -
        \begin{bmatrix}
            0 & 0 \\
            \lambda [\Gamma^\alpha_{\alpha\beta}] & \lambda \Gamma^\alpha_{\alpha n} 
        \end{bmatrix},\\
        \label{20} p_{-m}(x,\xi^{\prime})&=
        \begin{bmatrix}
            \mu I_{n-1} &0  \\
            0& \lambda+2\mu 
        \end{bmatrix}
        q_{-m}(x,\xi^{\prime}),\quad m\geqslant 1,
    \end{align}
    where $q_{-m}(x,\xi^{\prime})$, $m\geqslant 0$, are the remain symbols of a pseudodifferential operator $($see \eqref{3.1.1}$)$.
\end{theorem}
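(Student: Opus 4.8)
\textbf{Proof strategy for Theorem \ref{thm1.3}.}
The plan is to factor the second-order system \eqref{3.07} as in Proposition \ref{prop3.1} and then read off the symbol of $\Lambda_{\lambda,\mu}$ from the definition \eqref{1.8} of the displacement-to-traction map. The starting point is the identity, proved in \cite{TanLiu23}, that
\begin{align*}
    A^{-1}\mathcal{L}_{\lambda,\mu}
    =\Bigl(I_n\frac{\partial}{\partial x_n}+B-Q\Bigr)\Bigl(I_n\frac{\partial}{\partial x_n}+Q\Bigr)
\end{align*}
modulo smoothing, so that on solutions of $\mathcal{L}_{\lambda,\mu}\bm u=0$ the tangential--normal coupling is governed by the first-order operator $Q$, whose full symbol $q\sim\sum_{j\leqslant1}q_j$ is given explicitly by \eqref{3.9}--\eqref{3.1.1}. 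First I would observe that, working in boundary normal coordinates (so that $g_{nn}=1$, $g_{\alpha n}=0$ on $\partial M$ and the normal derivative is exactly $\partial/\partial x_n$), the solution operator satisfies $\frac{\partial\bm u}{\partial x_n}\big|_{\partial M}=-Q\,\bm u|_{\partial M}$ modulo a smoothing operator; this is the usual factorization argument that turns the Dirichlet-to-Neumann map into an operator composed from $Q$.

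Next I would insert this relation into \eqref{1.8}. The traction $\tau(\nu)=\lambda(\operatorname{div}\bm u)\nu+\mu(S\bm u)\nu$ involves the full gradient of $\bm u$, which splits into its tangential part (differentiation in $x^\prime$, contributing symbol factors of $i\xi_\alpha$) and its normal part $\partial\bm u/\partial x_n$ (replaced by $-Q\bm u$). Concretely, in boundary normal coordinates one has $\nu=\partial/\partial x_n$, and the components $(\operatorname{div}\bm u)\nu$ and $(S\bm u)\nu$ are linear in the first derivatives of $\bm u$; writing these out and substituting $\partial_{x_n}\to -q$ and $\partial_{x_\alpha}\to i\xi_\alpha$ yields the full symbol of $\Lambda_{\lambda,\mu}$ as an explicit matrix expression in $q$ and $\xi^\prime$. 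Collecting the terms of each homogeneity degree then produces the principal part $p_1$, the subprincipal part $p_0$, and the lower-order parts $p_{-m}$.

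Matching degrees is the core computation. For the principal symbol $p_1$ one keeps only the top-order contributions: the normal derivative contributes $-Aq_1$ where $A$ is the matrix \eqref{3.2}, and combined with the tangential $i\xi_\alpha$ terms from $\lambda(\operatorname{div}\bm u)\nu+\mu(S\bm u)\nu$ this should collapse, after using the explicit $q_1$ from \eqref{3.9} together with $F_1$ in \eqref{5.24}, to the matrix \eqref{18}. The key algebraic simplification here is that the coefficient $\frac{\lambda+\mu}{\lambda+3\mu}$ appearing in $q_1$ combines with the Lam\'e weights in $A$ to give the stated off-diagonal factor $\frac{2i\mu^2}{\lambda+3\mu}$ and the diagonal block $\frac{2\mu(\lambda+2\mu)}{\lambda+3\mu}|\xi^\prime|$. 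For the next term, degree zero, the subtlety is that the traction contains Christoffel symbols (from the covariant derivatives hidden in $\operatorname{div}$ and $S$), and the only degree-zero piece surviving after the leading $|\xi^\prime|$-cancellations and after subtracting the tangential divergence contribution is the matrix $-\bigl[\begin{smallmatrix}0&0\\ \lambda[\Gamma^\alpha_{\alpha\beta}]&\lambda\Gamma^\alpha_{\alpha n}\end{smallmatrix}\bigr]$, giving \eqref{19}. The fact that the lower parts $p_{-m}$, $m\geqslant1$, reduce to the clean form \eqref{20} reflects that, below order zero, the tangential first-derivative terms and all the Christoffel corrections no longer contribute homogeneously and only the normal contribution $-A q_{-m}$ survives; hence $p_{-m}=A\,q_{-m}$ after the overall sign is absorbed into the definition of $Q$ via the outward normal.

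The main obstacle I anticipate is the bookkeeping in the subprincipal term \eqref{19}: one must track exactly which Christoffel-symbol terms arise from expanding $\operatorname{div}\bm u$ and $(S\bm u)\nu$ covariantly, verify that the tangential divergence and strain contributions cancel against the degree-zero part of $-Aq_1$ and the $A q_0$ term, and confirm that precisely the single off-diagonal block with the $\lambda$-weighted Christoffel symbols remains. This requires carefully separating the symbol of $Q$ into its homogeneous pieces and composing them with the first-order symbols coming from the traction operator, using the composition formula already encoded in \eqref{2.11}--\eqref{4.1}; the algebra is lengthy but mechanical once the substitution $\partial_{x_n}\bm u|_{\partial M}=-Q\bm u|_{\partial M}$ is in place.
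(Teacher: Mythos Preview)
The paper does not give its own proof of Theorem~\ref{thm1.3}; the result is quoted verbatim from \cite{TanLiu23} and used as a black box in the proof of Theorem~\ref{thm1.1}. So there is no ``paper's own proof'' to compare your proposal against.

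That said, your outline is the standard route and is essentially how the result is obtained in \cite{TanLiu23}: factor $A^{-1}\mathcal{L}_{\lambda,\mu}$ as in Proposition~\ref{prop3.1}, use the outgoing factor to replace $\partial_{x_n}\bm u|_{\partial M}$ by $-Q\bm u|_{\partial M}$ modulo smoothing, substitute into the traction \eqref{1.8}, and collect homogeneous components. Two small points worth tightening if you actually carry this out. First, $p_1$ is \emph{not} simply $Aq_1$: the tangential first-order part of the traction (the $i\xi_\alpha$ contributions from $\lambda\operatorname{div}\bm u$ and $\mu(S\bm u)\nu$) enters at top order and is precisely what converts the off-diagonal entries $i\frac{\mu(\lambda+\mu)}{\lambda+3\mu}[\xi^\alpha]$, $i\frac{(\lambda+2\mu)(\lambda+\mu)}{\lambda+3\mu}[\xi_\beta]$ of $Aq_1$ into the $\mp\frac{2i\mu^2}{\lambda+3\mu}$ entries in \eqref{18}; your sentence ``the normal derivative contributes $-Aq_1$\dots combined with the tangential $i\xi_\alpha$ terms'' is correct but the algebra deserves to be written out. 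Second, be careful with the sign of the outward normal in boundary normal coordinates; the convention determines whether $\partial_{x_n}\bm u|_{\partial M}=-Q\bm u$ or $+Q\bm u$, and this fixes the overall sign so that \eqref{20} comes out as $p_{-m}=Aq_{-m}$ rather than $-Aq_{-m}$.
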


\vspace{5mm}

\section{Determining the Riemannian metric on the boundary}\label{s3}

\vspace{5mm}

\begin{proof}[Proof of Theorem {\rm \ref{thm1.1}}]
It follows from \eqref{18} that the $(n,n)$-entry $(p_1)^{n}_{n}$ of $p_1$ is
\begin{align*}
    (p_1)^{n}_{n} 
    = \frac{2\mu(\lambda+2\mu)}{\lambda+3\mu} |\xi^{\prime}|
    = \frac{2\mu(\lambda+2\mu)}{\lambda+3\mu} \sqrt{g^{\alpha\beta}\xi_\alpha\xi_\beta}.
\end{align*}
Since the Lam\'{e} coefficients $\lambda,\mu$ are known, this shows that $p_1$ uniquely determines $g^{\alpha\beta}$ on the boundary $\partial M$ for all $1\leqslant\alpha,\beta\leqslant n-1$. Clearly, the tangential derivatives $\frac{\partial g^{\alpha\beta}}{\partial x_\gamma}$ on the boundary $\partial M$ for all $1\leqslant \gamma \leqslant n-1$ can also be uniquely determined by $p_1$.

For $k\geqslant 0$, we denote by $T_{-k}$ the terms that only involve the boundary values of $g_{\alpha\beta}$, $g^{\alpha\beta}$, and their normal derivatives of order ar most $k$. Note that $T_{-k}$ may be different in different expressions. From \eqref{5.10}, we have
\begin{align}\label{3.06}
    b_0=\Gamma^{\alpha}_{n\alpha}I_{n} +T_0.
\end{align}
It follows from \eqref{3.9} and \eqref{5.24} that
\begin{align*}
    q_1=|\xi^{\prime}|I_{n} + s_2F_1,
\end{align*}
where $s_2=\frac{\lambda+\mu}{\lambda+3\mu}$. We compute that
\begin{align*}
    b_0 q_1 &=\Gamma^{\alpha}_{n\alpha}|\xi^{\prime}|I_{n} + s_2\Gamma^{\alpha}_{n\alpha}F_1 + 
    \begin{bmatrix}
        2|\xi^{\prime}|[\Gamma^\alpha_{\beta n}] & 0 \\[2mm]
        0 & 0
    \end{bmatrix}\notag\\
    &\quad + s_2
    \begin{bmatrix}
        2|\xi^{\prime}|^{-1}[\Gamma^\alpha_{\gamma n} \xi^{\gamma}\xi_{\beta}] & 2i[\Gamma^\alpha_{\gamma n} \xi^{\gamma}] \\[2mm]
        0 & 0
    \end{bmatrix}
    +T_0,\notag\\
    \frac{\partial q_1}{\partial x_n} &=\frac{\partial |\xi^{\prime}|}{\partial x_n}I_{n} + s_2
    \begin{bmatrix}
        \frac{\partial |\xi^{\prime}|^{-1}}{\partial x_n}[\xi^{\alpha}\xi_{\beta}] + |\xi^{\prime}|^{-1}\Big[\frac{\partial \xi^{\alpha}}{\partial x_n}\xi_{\beta}\Big] & i\Big[\frac{\partial \xi^{\alpha}}{\partial x_n}\Big] \\[4mm]
        0 & -\frac{\partial |\xi^{\prime}|}{\partial x_n}
    \end{bmatrix}
    +T_0,\notag\\
    -c_1 &= -\frac{i(\lambda+\mu)}{\mu}
    \begin{bmatrix}
        0 & \Gamma^\beta_{\beta n} [\xi^{\alpha}]\\[2mm]
        0 &0
    \end{bmatrix}
    -2i
    \begin{bmatrix}
        0 & [\xi^{\gamma}\Gamma^{\alpha}_{\gamma n}]\\[2mm]
        \frac{\mu}{\lambda+2\mu}[\xi^{\gamma}\Gamma^{n}_{\gamma\beta}] & 0
    \end{bmatrix}
    +T_0.
\end{align*}

By \eqref{5.24}, \eqref{5.25} and \eqref{2.11}, we get that the $(n,n)$-entries $(F_2E_1)^n_n$ and $(E_1F_1)^n_n$ are, respectively,
\begin{align}
    \label{6.1} (F_2E_1)^n_n &= -\frac{i\mu}{\lambda+2\mu}\xi_{\beta}\Bigl( is_2\Gamma^{\alpha}_{n\alpha}\xi^{\beta}+2is_2\Gamma^{\beta}_{n\gamma}\xi^{\gamma}+is_2\frac{\partial \xi^{\beta}}{\partial x_n}-\frac{i(\lambda+\mu)}{\mu}\Gamma^\gamma_{\gamma n} \xi^{\beta}-2i\xi^{\gamma}\Gamma^{\beta}_{\gamma n} \Bigr) \notag\\
    &\quad - |\xi^{\prime}|\Bigl( \Gamma^{\alpha}_{n\alpha}|\xi^{\prime}|-s_2\Gamma^{\alpha}_{n\alpha}|\xi^{\prime}|+\frac{\partial |\xi^{\prime}|}{\partial x_n}-s_2\frac{\partial |\xi^{\prime}|}{\partial x_n} \Bigr)+T_0\notag\\
    &= -\Gamma^{\alpha}_{n\alpha}|\xi^{\prime}|^2+\frac{2\mu(s_2-1)}{\lambda+2\mu}\Gamma^{\beta}_{\gamma n}\xi^{\gamma}\xi_{\beta}+\Bigl(\frac{\mu}{\lambda+2\mu}s_2-\frac{1}{2}+\frac{1}{2}s_2\Bigr)\frac{\partial |\xi^{\prime}|^2}{\partial x_n}+T_0,\\
    \label{6.2} (E_1F_1)^n_n & = i\xi^{\beta}\Bigl(is_2\Gamma^{\alpha}_{n\alpha}\xi_{\beta}-\frac{2i\mu}{\lambda+2\mu}\xi^{\gamma}\Gamma^n_{\gamma\beta}\Bigr) - |\xi^{\prime}|\Bigl( \Gamma^{\alpha}_{n\alpha}|\xi^{\prime}|-s_2\Gamma^{\alpha}_{n\alpha}|\xi^{\prime}|+\frac{\partial |\xi^{\prime}|}{\partial x_n}-s_2\frac{\partial |\xi^{\prime}|}{\partial x_n} \Bigr)\notag\\
    &\quad +T_0\notag\\
    &=-\Gamma^{\alpha}_{n\alpha}|\xi^{\prime}|^2+\frac{2\mu}{\lambda+2\mu}\Gamma^{n}_{\gamma\beta}\xi^{\gamma}\xi^{\beta}+\frac{1}{2}(s_2-1)\frac{\partial |\xi^{\prime}|^2}{\partial x_n}+T_0.
\end{align}
The $(n,\beta)$-entry $(F_2E_1)^n_\beta$ is 
\begin{align}
    \label{6.3} (F_2E_1)^n_\beta &= -\frac{i\mu}{\lambda+2\mu}\xi_{\alpha}\Bigl[ \Gamma^{\gamma}_{n\gamma}|\xi^{\prime}|\delta_{\alpha\beta}+s_2\Gamma^{\gamma}_{n\gamma}|\xi^{\prime}|^{-1}\xi^\alpha\xi_\beta+2|\xi^{\prime}|\Gamma^{\alpha}_{\beta n}+2s_2\Gamma^{\alpha}_{\gamma n}\xi^\gamma\xi_\beta|\xi^{\prime}|^{-1} \notag\\
    &\ + \frac{\partial |\xi^{\prime}|}{\partial x_n}\delta_{\alpha\beta}+s_2\Bigl( \frac{\partial |\xi^{\prime}|^{-1}}{\partial x_n}\xi^\alpha\xi_\beta+ |\xi^{\prime}|^{-1}\frac{\partial \xi^{\alpha}}{\partial x_n}\xi_\beta\Bigr)\Bigr] -|\xi^{\prime}|\Bigl(is_2\Gamma^{\alpha}_{n\alpha}\xi_\beta - \frac{2i\mu}{\lambda+2\mu}\Gamma^{n}_{\gamma\beta}\xi^{\gamma}\Bigr)\notag\\
    &\ +T_0\notag\\
    &=\Bigl(-\frac{i\mu(s_2+1)}{\lambda+2\mu}-is_2\Bigr)\Gamma^{\alpha}_{n\alpha}|\xi^{\prime}|\xi_\beta-\frac{2i\mu}{\lambda+2\mu}\Gamma^{\alpha}_{n\beta}|\xi^{\prime}|\xi_\alpha-\frac{2i\mu s_2}{\lambda+2\mu}\Gamma^{\alpha}_{n\gamma}|\xi^{\prime}|^{-1}\xi^\gamma\xi_\alpha\xi_\beta \notag\\
    &\quad -\frac{i\mu(s_2+1)}{\lambda+2\mu}\frac{\partial |\xi^{\prime}|}{\partial x_n}\xi_\beta+\frac{2i\mu}{\lambda+2\mu}\Gamma^{n}_{\gamma\beta}|\xi^{\prime}|\xi^\gamma+T_0.
\end{align}
Hence, the $(n,n)$-entry $(F_2E_1F_1)_{n}^n$ is 
\begin{align}
    \label{6.4} &(F_2E_1F_1)_{n}^n =(F_2E_1)^n_\beta (F_1)^\beta_n+(F_2E_1)^n_n(F_1)^n_n\notag\\
    & = i\xi^\beta \Bigl[\Bigl(-\frac{i\mu(s_2+1)}{\lambda+2\mu}-is_2\Bigr)\Gamma^{\alpha}_{n\alpha}|\xi^{\prime}|\xi_\beta-\frac{2i\mu}{\lambda+2\mu}\Gamma^{\alpha}_{n\beta}|\xi^{\prime}|\xi_\alpha -\frac{2i\mu s_2}{\lambda+2\mu}\Gamma^{\alpha}_{n\gamma}|\xi^{\prime}|^{-1}\xi^\gamma\xi_\alpha\xi_\beta\notag\\
    &\quad -\frac{i\mu(s_2+1)}{\lambda+2\mu}\frac{\partial |\xi^{\prime}|}{\partial x_n}\xi_\beta
     +\frac{2i\mu}{\lambda+2\mu}\Gamma^{n}_{\gamma\beta}|\xi^{\prime}|\xi^\gamma\Bigr]-|\xi^{\prime}| \Bigl[-\Gamma^{\alpha}_{n\alpha}|\xi^{\prime}|^2+\frac{2\mu(s_2-1)}{\lambda+2\mu}\Gamma^{\beta}_{\gamma n}\xi^{\gamma}\xi_{\beta} \notag\\
     &\quad +\Bigl(\frac{\mu s_2}{\lambda+2\mu}-\frac{1}{2}+\frac{1}{2}s_2\Bigr)\frac{\partial |\xi^{\prime}|^2}{\partial x_n}\Bigr]+T_0\notag\\
     &= 2\Gamma^{\alpha}_{n\alpha}|\xi^{\prime}|^3+\frac{4\mu}{\lambda+2\mu}\Gamma^{\alpha}_{n\gamma}\xi^\gamma\xi_\alpha|\xi^{\prime}|-\frac{2\mu}{\lambda+2\mu}\Gamma^{n}_{\beta\gamma}\xi^\gamma\xi^\beta|\xi^{\prime}|+\frac{\mu}{\lambda+2\mu}|\xi^{\prime}|\frac{\partial |\xi^{\prime}|^2}{\partial x_n}+T_0.
\end{align}
Therefore, by \eqref{3.1.1}, we obtain that the $(n,n)$-entry $(q_0)_{n}^n$ is 
\begin{align}
    \label{6.5} (q_0)_{n}^n &= \frac{1}{2}(1+s_2^2)\Gamma^{\alpha}_{n\alpha}+\frac{1}{4}(1-s_2^2)|\xi^{\prime}|^{-2}\frac{\partial |\xi^{\prime}|^2}{\partial x_n}\notag\\
    &\quad +\frac{\mu s_2}{\lambda+3\mu}(\Gamma^{\beta}_{\gamma n}\xi^{\gamma}\xi_\beta-\Gamma^{n}_{\beta\gamma}\xi^\gamma\xi^\beta)|\xi^{\prime}|^{-2}+T_0.
\end{align}
Note that in boundary normal coordinates, by computing directly, we have
\begin{align*}
    \Gamma^{\beta}_{\gamma n}\xi^{\gamma}\xi_\beta=-\Gamma^{n}_{\beta\gamma}\xi^\gamma\xi^\beta=-\frac{1}{2}\frac{\partial g^{\beta\gamma}}{\partial x_n}\xi_\beta\xi_\gamma=-\frac{1}{2}\frac{\partial |\xi^{\prime}|^2}{\partial x_n}.
\end{align*}
Substituting the above equalities and $s_2=\frac{\lambda+\mu}{\lambda+3\mu}$ into \eqref{6.5}, we get
\begin{align}
    \label{6.6} (q_0)_{n}^n &= \frac{\lambda^2+4\lambda\mu+5\mu^2}{(\lambda+3\mu)^2}\Gamma^{\alpha}_{n\alpha}+\frac{\mu^2}{(\lambda+3\mu)^2}|\xi^{\prime}|^{-2}\frac{\partial |\xi^{\prime}|^2}{\partial x_n}+T_0.
\end{align}
By \eqref{19}, we obtain
\begin{align}
    \label{6.7} (p_0)_{n}^n
    &=(\lambda+2\mu)(q_0)_{n}^n-\lambda \Gamma^{\alpha}_{n\alpha} \notag\\
    &=\frac{2\mu^2(2\lambda+5\mu)}{(\lambda+3\mu)^2}\Gamma^{\alpha}_{n\alpha}+\frac{\mu^2(\lambda+2\mu)}{(\lambda+3\mu)^2}|\xi^{\prime}|^{-2}\frac{\partial |\xi^{\prime}|^2}{\partial x_n}+T_0.
\end{align}
In boundary normal coordinates, we have
\begin{align*}
    \Gamma^{\alpha}_{n\alpha}=\frac{1}{2}g^{\alpha\beta}\frac{\partial g_{\alpha\beta}}{\partial x_n}=-\frac{1}{2}g_{\alpha\beta}\frac{\partial g^{\alpha\beta}}{\partial x_n}.
\end{align*}
Substituting this into \eqref{6.7}, we get
\begin{align}
    \label{6.8} (p_0)_{n}^n=-\frac{\mu^2}{(\lambda+3\mu)^2|\xi^{\prime}|^2}k_1^{\alpha\beta}\xi_\alpha\xi_\beta+T_0,
\end{align}
where
\begin{align}
    \label{6.9} k_1^{\alpha\beta}&=(2\lambda+5\mu)h_1g^{\alpha\beta}-(\lambda+2\mu)\frac{\partial g^{\alpha\beta}}{\partial x_n},\\
    \label{6.10} h_1&=g_{\alpha\beta}\frac{\partial g^{\alpha\beta}}{\partial x_n}.
\end{align}
Evaluating $(p_0)_{n}^n$ on all unit vectors $\xi^{\prime}$ shows that $p_0$ and the values of $g^{\alpha\beta}$ on the boundary $\partial M$ completely determine $k_1^{\alpha\beta}$. By \eqref{6.9} and \eqref{6.10}, we have 
\begin{align*}
    k_1^{\alpha\beta}g_{\alpha\beta}
    &=\bigl((n-1)(2\lambda+5\mu)-(\lambda+2\mu)\bigr)h_1.
\end{align*}
It follows from \eqref{0.3} that, for $n\geqslant 2$,
\begin{align*}
    (n-1)(2\lambda+5\mu)-(\lambda+2\mu)=(2n-3)(\lambda+\mu)+(3n-4)\mu >0.
\end{align*}
Then,
\begin{align*}
    h_1=\frac{k_1^{\alpha\beta}g_{\alpha\beta}}{(n-1)(2\lambda+5\mu)-(\lambda+2\mu)}.
\end{align*}
By \eqref{6.9}, we get that
\begin{align*}
    \frac{\partial g^{\alpha\beta}}{\partial x_n}=\frac{(2\lambda+5\mu)h_1g^{\alpha\beta}-k_1^{\alpha\beta}}{\lambda+2\mu},
\end{align*}
which implies that $p_0$ uniquely determines $\frac{\partial g^{\alpha\beta}}{\partial x_n}$ on the boundary $\partial M$.

According to \eqref{20}, \eqref{3.1.1}, and \eqref{3.05}, we know that $p_{-1}$ uniquely determines $q_{-1}$, $q_{-1}$
uniquely determines $E_0$, and
\begin{align}
    \label{6.11} (E_0)^n_n=\Bigl(\frac{\partial q_0}{\partial x_n}\Bigr)^n_n - (c_0)^n_n + T_{-1}.
\end{align}
In view of that
\begin{align*}
    \frac{\partial^2 (g_{\alpha\beta}g^{\alpha\beta})}{\partial x_n^2}=\frac{\partial^2 (n-1)}{\partial x_n^2}=0,
\end{align*}
we get
\begin{align*}
    g_{\alpha\beta}\frac{\partial^2 g^{\alpha\beta}}{\partial x_n^2} = - g^{\alpha\beta}\frac{\partial^2 g_{\alpha\beta}}{\partial x_n^2}+T_{-1}.
\end{align*}
Then, in boundary normal coordinates, we compute that
\begin{align*}
    \frac{\partial \Gamma^{\alpha}_{n\alpha}}{\partial x_n}&=-\frac{1}{2}g_{\alpha\beta}\frac{\partial^2 g^{\alpha\beta}}{\partial x_n^2}+T_{-1},\\
    g^{ml}\frac{\partial \Gamma^n_{ml}}{\partial x_n}&=\frac{1}{2}g_{\alpha\beta}\frac{\partial^2 g^{\alpha\beta}}{\partial x_n^2}+T_{-1}.
\end{align*}
Combining \eqref{6.11}, \eqref{6.6}, \eqref{5.13}, and the above results, we have 
\begin{align}
    \label{6.12} (E_0)^n_n 
    &= \frac{\lambda^2+4\lambda\mu+5\mu^2}{(\lambda+3\mu)^2} \frac{\partial \Gamma^{\alpha}_{n\alpha}}{\partial x_n} +\frac{\mu^2}{(\lambda+3\mu)^2}|\xi^{\prime}|^{-2}\frac{\partial^2 |\xi^{\prime}|^2}{\partial x_n^2} - \frac{\lambda+\mu}{\lambda+2\mu}\frac{\partial \Gamma^\alpha_{\alpha n}}{\partial x_n}  \notag\\
    &\quad - \frac{\mu}{\lambda+2\mu}g^{ml}\frac{\partial \Gamma^n_{ml}}{\partial x_n} +T_{-1} \notag\\
    &= -\frac{\mu^2(2\lambda+5\mu)}{(\lambda+3\mu)^2(\lambda+2\mu)}g_{\alpha\beta}\frac{\partial^2 g^{\alpha\beta}}{\partial x_n^2}+\frac{\mu^2}{(\lambda+3\mu)^2}|\xi^{\prime}|^{-2}\frac{\partial^2 g^{\alpha\beta}}{\partial x_n^2}\xi_\alpha\xi_\beta+T_{-1} \notag\\
    &= -\frac{\mu^2}{(\lambda+3\mu)^2(\lambda+2\mu)|\xi^{\prime}|^{2}}k_2^{\alpha\beta}\xi_\alpha\xi_\beta+T_{-1},
\end{align}
where
\begin{align}
    \label{6.13} k_2^{\alpha\beta}&=(2\lambda+5\mu)h_2g^{\alpha\beta}-(\lambda+2\mu)\frac{\partial^2 g^{\alpha\beta}}{\partial x_n^2},\\
    \label{6.14} h_2&=g_{\alpha\beta}\frac{\partial^2 g^{\alpha\beta}}{\partial x_n^2}.
\end{align}
By the same argument, we prove that $p_{-1}$ uniquely determines $\frac{\partial^2 g^{\alpha\beta}}{\partial x_n^2}$ on the boundary $\partial M$.

Now we consider $p_{-m-1}$ for $m\geqslant 1$. By \eqref{20} and \eqref{3.1.1}, we have $p_{-m-1}$ uniquely determines $q_{-m-1}$, and $E_{-m}$ can be determined from the knowledge of $q_{-m-1}$. From \eqref{4.1}, we see that
\begin{align}
    \label{6.18} E_{-m}=\frac{\partial q_{-m}}{\partial x_n}+T_{-m-1}.
\end{align}
We end this proof by induction. Suppose we have shown that
\begin{align}
    \label{6.15} (E_{-j})^n_n 
    &= -\frac{\mu^2}{(\lambda+3\mu)^2(\lambda+2\mu)|\xi^{\prime}|^{2}}k_{j+2}^{\alpha\beta}\xi_\alpha\xi_\beta+T_{-j-1}
\end{align}
for $1\leqslant j \leqslant m$, where
\begin{align}
    \label{6.16} k_{j+2}^{\alpha\beta}&=(2\lambda+5\mu)h_{j+2}g^{\alpha\beta}-(\lambda+2\mu)\frac{\partial^{j+2} g^{\alpha\beta}}{\partial x_n^{j+2}},\\
    \label{6.17} h_{j+2}&=g_{\alpha\beta}\frac{\partial^{j+2} g^{\alpha\beta}}{\partial x_n^{j+2}}.
\end{align}
This means that $p_{-j-1}$ uniquely determines $\frac{\partial^{j+2} g^{\alpha\beta}}{\partial x_n^{j+2}}$ on the boundary $\partial M$ for $1\leqslant j \leqslant m$.

Since we have $p_{-(m+1)-1}$ uniquely determines $q_{-(m+1)-1}$, and $q_{-(m+1)-1}$ uniquely determines $E_{-(m+1)}$. From \eqref{6.18}, we have
\begin{align*}
    E_{-(m+1)}=\frac{\partial q_{-(m+1)}}{\partial x_n}+T_{-(m+1)-1}.
\end{align*}
By the above equality and the fact that $q_{-(m+1)}$ uniquely determines $E_{-m}$, we have $E_{-(m+1)}$ uniquely determines $\frac{\partial E_{-m}}{\partial x_n}$. By the assumption \eqref{6.15}, we get 
\begin{align*}
    \Big(\frac{\partial E_{-m}}{\partial x_n}\Big)^n_n 
    &= -\frac{\mu^2}{(\lambda+3\mu)^2(\lambda+2\mu)|\xi^{\prime}|^{2}}k_{m+3}^{\alpha\beta}\xi_\alpha\xi_\beta+T_{-j-2},
\end{align*}
where
\begin{align*}
    k_{m+3}^{\alpha\beta}&=(2\lambda+5\mu)h_{m+3}g^{\alpha\beta}-(\lambda+2\mu)\frac{\partial^{m+3} g^{\alpha\beta}}{\partial x_n^{m+3}},\\
    h_{m+3}&=g_{\alpha\beta}\frac{\partial^{m+3} g^{\alpha\beta}}{\partial x_n^{m+3}}.
\end{align*}
By the same argument, we see that $p_{-(m+1)-1}$ uniquely determines $\frac{\partial^{m+3} g^{\alpha\beta}}{\partial x_n^{m+3}}$ on the boundary $\partial M$. Therefore, we conclude that the elastic Dirichlet-to-Neumann map $\Lambda_{\lambda,\mu}$ uniquely determines the partial derivatives of all orders of the Riemannian metric $\frac{\partial^{|J|} g^{\alpha\beta}}{\partial x^J}$ on the boundary $\partial M$ for all multi-indices $J$.

\end{proof}

\vspace{5mm}

\section*{Acknowledgements}

\vspace{5mm}

This work was supported by National Key R\&D Program of China 2020YFA0712800.

\vspace{5mm}

\vspace{5mm}

\end{document}